\numberwithin{equation}{section}
\newtheorem{theorem}{Theorem}
\newtheorem{lemma}[theorem]{Lemma}
\newtheorem{claim}[theorem]{Claim}
\newtheorem{remark}[theorem]{Remark}
\renewcommand{\pmod}[1]{\ (\mathrm{mod}\ #1)}
\title{A note on the mutual-visibility coloring of hypercubes}
\author{Maria Axenovich}
\address{Maria Axenovich \newline Karlsruhe Institute of Technology, Englerstraße 2, D-76131 Karlsruhe, Germany}
\email{maria.aksenovich@kit.edu}
\author{Dingyuan Liu}
\address{Dingyuan Liu \newline Karlsruhe Institute of Technology, Englerstraße 2, D-76131 Karlsruhe, Germany}
\email{liu@mathe.berlin}
\begin{document}
\begin{abstract}
A subset $M$ of vertices in a graph $G$ is a mutual-visibility set if for any two vertices $u,v\in{M}$ there exists a shortest $u$-$v$ path in $G$ that contains no elements of $M$ as internal vertices. Let $\chi_{\mu}(G)$ be the least number of colors needed to color the vertices of $G$, so that each color class is a mutual-visibility set. Let $n\in\mathbb{N}$ and $Q_{n}$ be an $n$-dimensional hypercube. It was proved by the authors that the maximum size of a mutual-visibility set in $Q_{n}$ is at least $\Omega(2^{n})$. Klav\v{z}ar, Kuziak, Valenzuela-Tripodoro, and Yero further asked whether it is true that $\chi_{\mu}(Q_{n})=O(1)$. In this note we answer their question in the negative by showing that $$\omega(1)=\chi_{\mu}(Q_{n})=O(\log\log{n}).$$
\end{abstract}
\maketitle

\vspace{-1em}
\section{Introduction}
Let $G$ be a simple graph. A subset $M\subseteq{V(G)}$ of vertices is called a \textit{mutual-visibility set} if any two vertices $u,v\in{M}$ can ``see'' each other, that is, there exists a shortest $u$-$v$ path in $G$ that contains no elements of $M$ as internal vertices. As in all other extremal problems, we are interested in the largest size of a mutual-visibility set in a given graph $G$, denoted as $\mu(G)$. The systematic investigation was pioneered by Di Stefano\cite{di2022mutual} and has garnered extensive attention and subsequent research\cite[etc.]{axenovich2024visibility,boruzanli2024mutual,brevsar2024lower,cicerone2024cubelike,cicerone2023variety,cicerone2023mutual,cicerone2024mutual,cicerone2024diameter,korvze2024mutual,korvze2024variety} in recent years. As the mutual-visibility problem was initially motivated by establishing efficient and confidential communication in networks, the research on $\mu(G)$ mainly focuses on sparse and highly connected graphs, such as product graphs and hypercube-like graphs. For $n\in\mathbb{N}$, the \textit{$n$-dimensional hypercube} $Q_{n}$ is a graph on the vertex set $2^{[n]}$, such that two vertices $A,B\in2^{[n]}$ form an edge in $Q_{n}$ if and only if their \textit{symmetric difference} $A\Delta{B}:=(A\backslash{B})\cup(B\backslash{A})$ has size $1$. It is known that $Q_{n}$ contains a large mutual-visibility set.
\begin{theorem}[{\hspace{-0.3mm}\cite[Theorem 1.2]{axenovich2024visibility}}]
\label{old}
For every $n\in\mathbb{N}$, we have $\mu(Q_{n})>0.186\cdot2^{n}$.
\end{theorem}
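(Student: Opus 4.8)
The plan is to exhibit an explicit large set $M\subseteq 2^{[n]}$ and to verify the visibility condition through a clean reformulation in terms of subcubes. First I would record the following description: for $u,v\in 2^{[n]}$, the vertices lying on some shortest $u$-$v$ path are exactly those of the interval $[u\cap v,\,u\cup v]$, which spans a subcube $C\cong Q_{d}$ with $d=|u\Delta v|$ in which $u$ and $v$ are antipodal corners. A shortest $u$-$v$ path corresponds to an ordering of the coordinates of $u\Delta v$, i.e. a monotone corner-to-corner path of $C$, visiting exactly one vertex at each level measured from $u$. Hence $M$ is a mutual-visibility set if and only if for every $u,v\in M$ some such monotone path avoids $M$ in the interior of $C$. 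Tracking the size $|w|$ along the path, this size changes by $\pm1$ at every step, so the decisive data is which weight levels $L_{k}=\{w:|w|=k\}$ the set $M$ occupies inside each interval.

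This reformulation dictates the construction. A single layer, or any antichain, is trivially a mutual-visibility set but has size only $O(2^{n}/\sqrt n)$; two full layers $L_{m}$ and $L_{m+r}$ with $r\ge 3$ still work, since between them lies a free band of $r-1\ge 2$ empty levels through which every pair can be routed, but three full layers already fail, because a comparable pair straddling the middle layer is forced to cross it. To break the $1/\sqrt n$ barrier one must therefore spread $M$ across $\Theta(\sqrt n)$ levels, which forces \emph{partial} layers. Accordingly I would fix a spacing $r\ge 3$, take the weights $k\equiv 0\pmod r$ inside a central band $|k-n/2|\le\gamma n$, and keep inside each such layer a carefully chosen fraction $1-\beta$ of its vertices, deleting a $\beta$-fraction to manufacture escape vertices. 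The resulting $M$ has density about $(1-\beta)/r$, and the stated constant $0.186$ would be obtained by optimizing $r$ and $\beta$. Note that the spacing already eliminates the most troublesome small obstructions: a blocked $Q_{2}$ would require three consecutive weights all divisible by $r$, which is impossible for $r\ge3$, so no pair at distance $2$ is ever blocked, and since the levels $k\pm1$ adjacent to an occupied level $k$ are empty, the first and last step of every path automatically leaves $M$.

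The heart of the argument, and the step I expect to be the main obstacle, is verifying that no larger subcube is blocked. The difficulty arises for $u,v\in M$ whose interval straddles several occupied layers $L_{k+r},L_{k+2r},\dots$: along one monotone path one must pass through a deleted escape vertex at \emph{each} such intermediate layer, and these escape vertices must be compatible, i.e. lie on a single common path. This is a nested system-of-distinct-representatives problem across the intermediate layers of the interval, made harder for incomparable pairs, where the weight profile is non-monotone and the set of layers actually crossed depends on the chosen ordering. The plan is to resolve it by choosing the deleted sets pseudorandomly (or via a balanced character-type rule) so that within every interval each intermediate layer offers escape vertices in every direction, and then to assemble the nested transversal greedily, level by level, exploiting the empty bands as slack; an alternative is a purely probabilistic $M$ followed by deletion of the few genuinely blocked configurations, which are now all of distance at least $3$ and hence rare. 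Carrying either version through while keeping the density at $(1-\beta)/r\ge 0.186$ is the crux, and it is precisely where the quantitative constant in the statement is won.
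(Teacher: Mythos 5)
You have correctly reverse-engineered the architecture that the paper attributes to \cite[Theorem 1.2]{axenovich2024visibility} (note that the present paper does not prove Theorem~\ref{old} at all; it only imports the one ingredient of that proof it needs, namely Lemma~\ref{ear}): layers spaced $r\ge3$ apart, each kept only partially so that the deleted vertices hit every interval, with the empty bands used as slack for routing. But as a proof the proposal has two genuine gaps, both at the step you yourself flag as the crux. First, the stitching claim — that it suffices for each occupied intermediate layer to offer, inside the interval $[u\cap v,u\cup v]$, \emph{some} free vertex, and that these can then be assembled into a single shortest path, including for incomparable pairs with non-monotone weight profile — is exactly the content of Lemma~\ref{ear} (\cite[Lemma 2.2]{axenovich2024visibility}), and your ``greedy, level by level'' plan for it is stated but not carried out; this is a real lemma with a nontrivial proof, not bookkeeping. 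Second, you never exhibit the deleted sets: the ``balanced character-type rule'' is named but not defined, the hitting property (every $2r$-dimensional interval contains a deleted vertex at each occupied middle level) is not verified for any concrete rule, and no value of $(r,\beta)$ yielding density $\ge 0.186$ is derived, so the quantitative statement is not established.

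Moreover, one of your two fallback routes provably cannot work and should be cut. With constant spacing $r$ and constant deletion rate $\beta$, a fixed comparable pair $(A,B)$ with $|B\setminus A|=2r$ is blocked with probability $(1-\beta)^{\binom{2r}{r}}$, a constant, while the number of such pairs whose middle lies in layer $k$ is $\mathrm{poly}(n)\cdot\binom{n}{k}$; hence the expected number of blocked configurations exceeds $|M|$ by a polynomial factor in $n$, so deletion destroys the set, and the Local Lemma requires $e\,p\,(d+1)\le1$ with $d=\Theta(n^{2r})$, forcing $\binom{2r}{r}=\Omega(r\log n)$, i.e.\ $r=\Omega(\log\log n)$ and density $O(1/\log\log n)=o(1)$. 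Blocked configurations at distance $\ge3$ are therefore \emph{not} rare in the relevant sense; indeed this computation is precisely the upper-bound argument of Section~3 of the present paper, which is why randomness there buys only $\chi_{\mu}(Q_{n})=O(\log\log n)$ rather than a constant. The viable branch of your plan is the explicit one — for instance, at level $k$ one can delete $\bigl\{T:\sum_{i\in T}i\equiv c_{k}\pmod m\bigr\}$ for suitable $m$, using that the achievable sums over the $\binom{2r}{r}$ middle vertices of any interval form a contiguous range of length greater than $r^{2}$ — but since you neither construct such a rule nor prove the stitching lemma, the proposal remains a correct plan with its decisive steps missing, not a proof.
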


Recently, Klav\v{z}ar, Kuziak, Valenzuela-Tripodoro, and Yero\cite{klavzar2024color} introduced the coloring version of the mutual-visibility problem. Given a coloring of the vertices of $G$, we say that $G$ is \textit{properly colored} if each color class is a mutual-visibility set in $G$. The function considered in their paper is $\chi_{\mu}(G)$, which is the least number of colors needed to properly color $V(G)$. Equivalently, $\chi_{\mu}(G)$ is the smallest integer such that $V(G)$ can be partitioned into $\chi_{\mu}(G)$ mutual-visibility sets. It is easy to see that $\chi_{\mu}(G)\geq\lvert{V(G)}\rvert/\mu(G)$. Naturally, one might ask whether $\chi_{\mu}(G)=O\left(\lvert{V(G)}\rvert/\mu(G)\right)$ could be true in general. Since $\mu(Q_{n})=\Omega(Q_{n})$, Klav\v{z}ar et al.\cite{klavzar2024color} raised the following question:
\begin{center}
\textit{Is there an absolute constant $C>0$, such that $\chi_{\mu}(Q_{n})\leq{C}$ holds for all $n\in\mathbb{N}$?}
\end{center}

We answer their question in the negative with the following result. In particular, this shows that $\chi_{\mu}(G)$ can be arbitrarily far from the trivial lower bound $\lvert{V(G)}\rvert/\mu(G)$.
\begin{theorem}
\label{new}
$\omega(1)=\chi_{\mu}(Q_{n})=O(\log\log{n})$.
\end{theorem}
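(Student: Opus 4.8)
The plan is to treat both bounds through a single structural lens: a set $M \subseteq V(Q_n)$ is a mutual-visibility set if and only if for every $u,v \in M$ the subcube $I(u,v) := \{w : u\cap v \subseteq w \subseteq u\cup v\} \cong Q_{|u\Delta v|}$ admits a maximal chain from $u$ to $v$ whose internal vertices avoid $M$; equivalently, $M\cap I(u,v)$ does not form a cutset (a transversal of all maximal chains) in that interval. I would first record the basic consequences I expect to drive everything: any single weight-layer is a mutual-visibility set (route $u$ to $v$ by first deleting all of $u\setminus v$ and then inserting all of $v\setminus u$, so that the internal weights dip strictly below and only return to the common level at the endpoint), and, dually, no color class may contain a complete $2$-dimensional subcube, since the two midpoints of such a square cut both diagonal geodesics. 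The point I would stress at the outset is that the trivial bound $\chi_\mu \ge 2^n/\mu(Q_n)$ is only $O(1)$ here by Theorem \ref{old}, so neither direction can come from the size of a single class; the content is entirely in how the classes must fit together.

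For the upper bound I would aim for a recursion of the shape $\chi_\mu(Q_n) \le \chi_\mu(Q_{m}) + O(1)$ with $m = O(\sqrt n)$, which iterates to $O(\log\log n)$ since $n \mapsto \sqrt n$ reaches a constant in $\log\log n$ steps. Concretely, partition $[n]$ into about $\sqrt n$ blocks each of size about $\sqrt n$ and, for a vertex $A$, consider its block-profile $(|A\cap B_1|,\dots,|A\cap B_{\sqrt n}|)$. I would spend a bounded palette on a coarse invariant — a combination of the scaled weight $\lfloor |A|/\sqrt n\rfloor$ with a parity/offset read off the profile — chosen so that within any monochromatic interval $I(u,v)$ the coarse middle levels are never monochromatic, which is exactly what rules out the full-layer cutsets that a pure weight-coloring would create; the finer resolution needed inside the blocks is then supplied recursively by a coloring of a cube of dimension about $\sqrt n$. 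The main obstacle in this step is the mutual-visibility verification for a same-color pair $u,v$ whose symmetric difference is spread across many blocks: one must exhibit a single geodesic that simultaneously moves mass within blocks and redistributes it across blocks while dodging $M$ at every internal step, i.e.\ reconcile the delete-then-insert routing inside a block with the routing dictated by the recursive coloring between blocks.

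For the lower bound $\chi_\mu(Q_n) = \omega(1)$ I would argue that no partition into a bounded number of mutual-visibility sets can exist, exploiting the cutset characterization at growing scales rather than density. The skeleton I have in mind is an inductive reduction: from a $k$-coloring of $Q_n$ I would try to locate a large subinterval $I(u,v) \cong Q_{m}$, with $m \to \infty$ as $n \to \infty$ for fixed $k$, on which one color becomes dispensable, yielding a $(k-1)$-coloring of $Q_m$; iterating $k$ times would reach a positive-dimensional cube with no colors available, a contradiction. To feed the induction I would quantify the obstruction: for a class $M$ and a pair $u,v \in M$, the set $M \cap I(u,v)$ must miss a full maximal chain, and a supersaturation count over a suitable family of nested intervals should force the colors to interleave along long chains in a way that cannot persist across all scales. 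I expect this to be the crux of the whole paper: because individual classes may legitimately occupy a constant fraction of $Q_n$ and even an entire middle layer, the argument cannot be local — monochromatic unit-squares are avoidable with two colors — and must instead extract an unavoidable monochromatic interval-cutset from the global interaction of the classes.
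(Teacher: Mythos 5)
Both halves of your proposal stop exactly where the real work begins, and in both cases the missing piece is the one the paper supplies. For the upper bound, your recursion $\chi_{\mu}(Q_{n})\leq\chi_{\mu}(Q_{m})+O(1)$ with $m=O(\sqrt{n})$ is never established: you do not specify the coarse invariant, and the property you ask of it --- that monochromatic intervals have non-monochromatic middle levels --- only rules out the one obstruction you identified; mutual visibility demands a \emph{positive} verification, namely an explicit geodesic between any same-colored $u,v$ avoiding the class at every internal vertex, and nothing in the block-profile setup produces one (you yourself flag this as ``the main obstacle''). It is also unclear that visibility composes across blocks: the recursive coloring lives on a cube of dimension about $\sqrt{n}$ and says nothing about pairs in $Q_{n}$ whose block profiles coincide but whose symmetric difference is spread over many blocks. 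The paper does not recurse at all. It sets $g=\lfloor\log\log{n}\rfloor$, colors by (layer residue mod $g$) $\times$ (a $2$-coloring of each layer), and outsources the geodesic construction to Lemma \ref{ear}: if each color class within layer $k$ satisfies the escape condition \eqref{are} --- for every $A\subseteq{B}$ with $\lvert{A}\rvert=k-g$, $\lvert{B}\rvert=k+g$ some $T$ at level $k$ between them avoids the class --- then the union of the classes over layers in one residue class is a mutual-visibility set. The required $2$-coloring of each layer exists by the Lov\'{a}sz Local Lemma: each bad event (a monochromatic interval of the $\binom{2g}{g}$ sets between such an $A$ and $B$) has probability $2^{1-\binom{2g}{g}}$ and dependency degree below $(en/g)^{2g}$, and with this choice of $g$ one has $\binom{2g}{g}$ roughly $(\log{n})^{2}/O(\sqrt{\log\log{n}})$, which dominates $2g\log{n}$. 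That local-lemma-plus-Lemma-\ref{ear} mechanism is precisely what discharges the obstacle your sketch leaves open.

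For the lower bound, your color-elimination induction has no engine: you give no reason why a color should become ``dispensable'' on a large subinterval, and ``a supersaturation count should force the colors to interleave'' is a hope, not a step. (Your monotonicity observation --- restricting a proper coloring to an interval subcube stays proper, since geodesics between its vertices stay inside it --- is correct but yields nothing by itself.) Your instinct that the obstruction is a monochromatic interval-cutset is right, and it matches Claim \ref{pop}: if one class contains three layers $\mathcal{L}_{i},\mathcal{L}_{j},\mathcal{L}_{k}$ ($i<j<k$) of a subcube, then for $A\in\mathcal{L}_{i}$, $B\in\mathcal{L}_{k}$ with $A\subseteq{B}$ every geodesic passes through a $C$ with $A\subseteq{C}\subseteq{B}$ at level $j$, which lies in $\mathcal{L}_{j}$. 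But the extraction of this configuration from an arbitrary $q$-coloring is a concrete iterated Ramsey argument, entirely absent from your proposal: pigeonhole gives $X_{1}\subseteq[n]$ with $\binom{X_{1}}{1}$ monochromatic, then the hypergraph Ramsey numbers $r_{k}(\cdot;q)$ applied successively for $k=2,\dots,2q$ give nested sets $X_{2q}\subseteq\dots\subseteq{X_{1}}$ with every $\binom{X_{k}}{k}$ monochromatic, so for $n\geq{q}\cdot\left(r_{2}\circ\dots\circ{r_{2q}(2q)}\right)$ the subcube $2^{X_{2q}}\cong{Q_{2q}}$ has all $2q+1$ layers monochromatic, and pigeonhole over $q$ colors forces three layers into one class. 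Without this (or some substitute with the same Ramsey-type content, which is where the tower-type dependence on $q$ noted in the paper's remark comes from), your induction never gets off the ground.
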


\section{Proof of Theorem \ref{new}: lower bound}
\begin{proof}[Proof of the lower bound]
Fix any positive integer $q$. It suffices to show that there exists $n_{0}>0$, such that $\chi_{\mu}(Q_{n})>q$ holds for all $n\geq{n_{0}}$.

Let $n\geq{n_{0}}$ with some $n_{0}>0$ to be determined later and fix an arbitrary $q$-coloring of $V(Q_{n})$. We shall argue that $Q_{n}$ is not properly colored, i.e., some color class is not a mutual-visibility set in $Q_{n}$. To do this, we first prove the following claim. Let $n\geq{k}\in\mathbb{N}$. The \textit{$k$th layer} of a subgraph $Q\subseteq{Q_{n}}$, denoted by $\mathcal{L}_{k}(Q)$, is defined as $V(Q)\cap\binom{[n]}{k}$.
\begin{claim}
\label{pop}
Let $n\geq{n'}\geq2$ and $Q\subseteq{Q_{n}}$ be a copy of $Q_{n'}$. If $M\subseteq{V(Q_{n})}$ contains three layers of $Q$, then $M$ is not a mutual-visibility set in $Q_{n}$.
\end{claim}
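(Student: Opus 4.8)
The plan is to localize the problem inside the subcube $Q$ and then exploit the monotone structure of shortest paths between nested vertices. First I would record the coordinates describing $Q$: there is a set $F\subseteq[n]$ with $|F|=n-n'$ and a fixed $A_0\subseteq F$ so that $V(Q)=\{A_0\cup B: B\subseteq S\}$, where $S:=[n]\setminus F$ has size $n'$. The crucial preliminary observation is that any two vertices $u=A_0\cup B_u$ and $v=A_0\cup B_v$ of $Q$ satisfy $u\Delta v=B_u\Delta B_v\subseteq S$, so every shortest $u$-$v$ path in $Q_{n}$ flips only coordinates in $S$ and hence never leaves $V(Q)$. Therefore, to prove that $M$ is not a mutual-visibility set, it suffices to exhibit two vertices of $M\cap V(Q)$ whose every shortest path, which automatically lies in $Q$, meets $M$ at an internal vertex.

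Since $M$ contains three layers of $Q$, there are indices $k_1<k_2<k_3$ with $\mathcal{L}_{k_1}(Q),\mathcal{L}_{k_2}(Q),\mathcal{L}_{k_3}(Q)\subseteq M$, all nonempty. I would then pick a nested pair: choose $u\in\mathcal{L}_{k_3}(Q)$, say $u=A_0\cup B_u$ with $|B_u|=k_3-|A_0|\le n'$, and choose $v=A_0\cup B_v$ with $B_v\subseteq B_u$ and $|B_v|=k_1-|A_0|\ge0$, so that $v\in\mathcal{L}_{k_1}(Q)$ and $v\subseteq u$. Both $u$ and $v$ lie in $M$, and the nonemptiness of the three layers is exactly what guarantees these cardinality constraints are feasible.

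To finish, observe that because $v\subseteq u$ we have $u\Delta v=u\setminus v$, so every shortest $u$-$v$ path simply deletes the elements of $u\setminus v$ one at a time; along such a path the cardinalities decrease by exactly one at each step, running through every integer from $k_3$ down to $k_1$. In particular some internal vertex $w$ has $|w|=k_2$ and satisfies $v\subsetneq w\subsetneq u$, whence $w\in\mathcal{L}_{k_2}(Q)\subseteq M$. Thus $u$ and $v$ cannot see each other, and $M$ is not a mutual-visibility set in $Q_{n}$.

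I do not expect a genuine obstacle in proving the claim itself; the argument is short once the two structural facts are in place. The only point requiring care is the first one, namely verifying that every shortest path between vertices of $Q$ stays inside $Q$, since without it a middle-layer vertex need not actually lie on the chosen path. (The substantive work of the lower bound presumably comes afterward, in combining Claim~\ref{pop} with a pigeonhole or Ramsey-type argument that forces some color class to contain three layers of some subcube $Q\subseteq Q_n$.)
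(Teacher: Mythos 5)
Your proposal is correct and follows essentially the same route as the paper: pick a nested pair $v\subseteq u$ in the lowest and highest of the three layers and observe that every shortest $u$-$v$ path must pass through a vertex of the middle layer, which lies in $M$. Your explicit verification of the subcube coordinatization and that such intermediate vertices stay in $V(Q)$ is the same structural fact the paper invokes implicitly with ``as $Q$ is a copy of $Q_{n'}$,'' so you have merely spelled out a step the paper leaves to the reader.
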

\begin{proof}[Proof of Claim \ref{pop}]
Let $\mathcal{L}_{i}$, $\mathcal{L}_{j}$ and $\mathcal{L}_{k}$ be the three layers of $Q$ contained in $M$, where $i<j<k$. Take two vertices $A\in\mathcal{L}_{i}$ and $B\in\mathcal{L}_{k}$ with $A\subseteq{B}$. Observe that every shortest $A$-$B$ path goes through some vertex $C\in{V(Q_{n})}$ satisfying $\lvert{C}\rvert=j$ and $A\subseteq{C}\subseteq{B}$. As $Q$ is a copy of $Q_{n'}$, all such vertices $C$ are contained in $\mathcal{L}_{j}$. Namely, every shortest $A$-$B$ path must go through the layer $\mathcal{L}_{j}$, so $M$ is not a mutual-visibility set.
\end{proof}
To show that $Q_{n}$ is not properly colored, by Claim \ref{pop} it suffices to find a copy of lower dimensional hypercube, which has three layers receiving the same color. Our argument here is a generalization of that in \cite[Lemma 2]{axenovich2017boolean}. Given integers $s\geq{k}\geq2$, the \textit{$q$-color hypergraph Ramsey number} $r_{k}(s;q)$ is defined as the smallest integer $r$ such that any $q$-coloring of $\binom{[r]}{k}$ contains a monochromatic copy of $\binom{[s]}{k}$. Since $q$ is fixed in the beginning, for simplicity we write $r_{k}(s)=r_{k}(s;q)$. Now let
\begin{equation}
\label{tow}
n_{0}=q\cdot\left(r_{2}\circ{r_{3}}\circ\dots\circ{r_{2q}(2q)}\right).
\end{equation}
Recall that $V(Q_{n})=2^{[n]}$ and $n\geq{n_{0}}$. By the pigeonhole principle there exists $X_{1}\subseteq[n]$ with
\begin{equation}
\lvert{X_{1}}\rvert\geq{n/q}\geq{r_{2}\circ{r_{3}}\circ\dots\circ{r_{2q}(2q)}},
\end{equation}
such that $\binom{X_{1}}{1}$ is monochromatic. Then, due to the size of $X_{1}$, there exists $X_{2}\subseteq{X_{1}}$ with
\begin{equation}
\lvert{X_{2}}\rvert\geq{r_{3}}\circ\dots\circ{r_{2q}(2q)},
\end{equation}
such that $\binom{X_{2}}{2}$ is monochromatic. By repeating this argument we obtain a sequence of sets
\begin{equation}
X_{2q}\subseteq{X_{2q-1}}\subseteq\dots\subseteq{X_{2}}\subseteq{X_{1}}\subseteq[n],
\end{equation}
such that $\lvert{X_{2q}}\rvert=2q$ and $\binom{X_{k}}{k}$ is monochromatic for each $k\in[2q]$. Let $Q$ be the subgraph of $Q_{n}$ induced by $2^{X_{2q}}$, in particular, $Q$ is a copy of $Q_{2q}$. We have that every layer of $Q$ is monochromatic. Since $Q$ contains $2q+1$ layers and there are $q$ colors, by the pigeonhole principle at least three layers of $Q$ receive the same color. This completes the proof of the lower bound in Theorem \ref{new}.
\end{proof}

\begin{remark}
Although the proof above shows that $\chi_{\mu}(Q_{n})$ grows with $n$, the dependence of the lower bound on $n$ is quite unsatisfactory. In fact, it is even difficult to express the lower bound on $\chi_{\mu}(Q_{n})$ in terms of $n$, since for $\chi_{\mu}(Q_{n})>q$ we require $n$ to be at least a composition of $q$-color hypergraph Ramsey numbers (see \eqref{tow}), which is a tower function of $q$ with height roughly $\Theta(q^{2})$ (for references on bounds on hypergraph Ramsey numbers, see, e.g., \cite{mubayi2020ramsey}). It would be interesting to actually obtain an expressible lower bound.
\end{remark}

\section{Proof of Theorem \ref{new}: upper bound}
Our proof of the upper bound consists two steps. First, we will reduce the problem from coloring the entire vertex set $V(Q_{n})$ to coloring a single layer of $Q_{n}$ in a certain way. Then, we show that a random coloring of the layer is potentially a good coloring. To achieve the first step, we need the following lemma from \cite{axenovich2024visibility}.
\begin{lemma}[{\hspace{-0.3mm}\cite[Lemma 2.2]{axenovich2024visibility}}]
\label{ear}
Let $n,g\in\mathbb{N}$, $g\geq3$. For each $k\in[0,n]:=\{0,1,\dots,n\}$, let $\mathcal{F}_{k}\subseteq\binom{[n]}{k}$ be such that
\begin{equation}
\label{era}
\forall\,A,B\in2^{[n]}\text{ with }\lvert{A}\rvert+g\leq{k}\leq\rvert{B}\rvert-g,\,\exists\,T\in\binom{[n]}{k}\backslash\mathcal{F}_{k}:\,A\cap{B}\subseteq{T}\subseteq{A}\cup{B}.
\end{equation}
Let $\lambda\in[g]$ and $I_{\lambda}=\left\{k\in[0,n]:\,k\equiv\lambda\pmod{g}\right\}$. Then $M:=\bigcup_{k\in{I_{\lambda}}}\mathcal{F}_{k}$ is a mutual-visibility set in $Q_{n}$.
\end{lemma}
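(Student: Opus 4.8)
The plan is to prove the mutual-visibility property directly: I would take arbitrary $A,B\in M$ and exhibit one shortest $A$–$B$ path with no internal vertex in $M$. Write $C=A\cap B$, $D_A=A\setminus B$ and $D_B=B\setminus A$, and recall that a shortest $A$–$B$ path in $Q_n$ is exactly a choice of order in which to delete the elements of $D_A$ and insert the elements of $D_B$; every vertex on such a path has the form $C\cup(D_A\setminus R)\cup S$ with $R\subseteq D_A$, $S\subseteq D_B$, and has size $|A|-|R|+|S|$. Since $M$ lives only in the layers $k\equiv\lambda\pmod g$, the only internal vertices I must keep out of $M$ are those whose size is $\equiv\lambda\pmod g$; call these the \emph{critical} layers. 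Assuming without loss of generality that $|A|\le|B|$, both $|A|$ and $|B|$ are $\equiv\lambda\pmod g$, so the intermediate critical layers are exactly $k_i=|A|+ig$ for $1\le i\le m$, where $|B|-|A|=(m+1)g$.

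First I would produce safe crossing points for these layers. The tempting move is to apply \eqref{era} to the pair $(A,B)$ once per layer, but the resulting safe vertices $T\in\binom{[n]}{k_i}\setminus\mathcal{F}_{k_i}$ need not lie on a single geodesic, so this fails. Instead I would build them inductively: set $T_0=A$ and, given a safe waypoint $T_i=C\cup(D_A\setminus R_i)\cup S_i$, apply \eqref{era} to the pair $(T_i,B)$ at layer $k_{i+1}$ (for $0\le i<m$ the hypotheses $|T_i|+g\le k_{i+1}\le|B|-g$ hold, with equality on the left). This returns a safe $T_{i+1}$ with $T_i\cap B\subseteq T_{i+1}\subseteq T_i\cup B$; computing $T_i\cap B=C\cup S_i$ and $T_i\cup B=B\cup(D_A\setminus R_i)$ shows precisely that the deleted set $R_{i+1}=D_A\setminus T_{i+1}$ grows ($R_{i+1}\supseteq R_i$) and the inserted set $S_{i+1}=T_{i+1}\cap D_B$ grows ($S_{i+1}\supseteq S_i$). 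This monotonicity is the whole point: along the concatenation $A=T_0,T_1,\dots,T_m,B$ each element of $D_A$ is deleted exactly once and each element of $D_B$ inserted exactly once, so the concatenation is a genuine geodesic and each $T_i$ is safe.

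It remains to route each leg $T_i\to T_{i+1}$ (and the final $T_m\to B$) so that its interior avoids all critical layers. A leg has net size change $+g$, say with $p$ deletions and $p+g$ insertions; I would realize it as a lattice path of sizes from $k_i$ to $k_{i+1}=k_i+g$ that stays strictly inside the band $(k_i,k_i+g)$ except at its two endpoints. Because $g\ge3$ this band has interior width at least $2$, so one can first climb to size $k_i+g-1$, then absorb the $p$ deletions as $(\text{down},\text{up})$ excursions between sizes $k_i+g-2$ and $k_i+g-1$, and finally step up to $k_{i+1}$; no interior vertex then has size $\equiv\lambda\pmod g$. The degenerate case $|A|=|B|$ (no intermediate critical layers) is handled by the same band trick applied to a single net-zero leg.

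Putting these together, the concatenated path is a shortest $A$–$B$ path whose only critical-layer vertices are the safe waypoints $T_1,\dots,T_m$, while every other interior vertex lies off the layers of $M$ altogether; hence no internal vertex belongs to $M$, and $A,B$ see each other. The main obstacle, and the step on which I would spend the most care, is the coordination in the second paragraph: guaranteeing that the safe crossing points can be placed on a \emph{single} geodesic. The clean resolution is to feed the current waypoint back into \eqref{era} as the new lower endpoint, which forces the nested ``delete-once, insert-once'' structure; the hypothesis $g\ge3$ then enters only to give each leg enough vertical room to dispose of its deletions without ever touching a critical layer.
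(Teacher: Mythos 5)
Your proof is correct and follows essentially the same route as the original argument for this lemma (which the present paper imports from \cite[Lemma 2.2]{axenovich2024visibility} without reproving it): safe waypoints at the layers of $I_{\lambda}$ are produced by iterating \eqref{era} on the pairs $(T_{i},B)$ — exactly the reason \eqref{era} is stated for non-nested pairs — and consecutive waypoints are joined by geodesic legs whose size profile stays strictly inside a band of width $g$, with $g\geq3$ giving the room for the down–up excursions. Your monotonicity argument for $R_{i}$ and $S_{i}$ (which makes the concatenation a single geodesic), the separate treatment of the equal-size case, and the pigeonholed use of $g\geq3$ all match the cited proof.
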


From Lemma \ref{ear} we derive the key lemma of our proof.
\begin{lemma}
\label{eye}
Let $n,g\in\mathbb{N}$, $g\geq3$. Suppose for every $k\in[0,n]$ we can color $\binom{[n]}{k}$ with $q$ colors so that each color class $\mathcal{F}_{k}^{i}$ with $i\in[q]$ satisfies the following
\begin{equation}
\label{are}
\forall\,A\in\binom{[n]}{k-g}\text{ and }B\in\binom{[n]}{k+g}\text{ with }A\subseteq{B},\,\exists\,T\in\binom{[n]}{k}\backslash\mathcal{F}_{k}^{i}:\,A\subseteq{T}\subseteq{B}.
\end{equation}
Then we have $\chi_{\mu}(Q_{n})\leq{gq}$.
\end{lemma}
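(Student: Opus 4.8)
The plan is to assemble a single $gq$-coloring of $V(Q_n)$ from the given per-layer $q$-colorings by pairing them with the residue partition of the layers modulo $g$, and then to certify that each resulting color class is a mutual-visibility set through Lemma \ref{ear}. The only genuine work is showing that the exact-distance local hypothesis \eqref{are} implies the ``all admissible $A,B$'' hypothesis \eqref{era} demanded by Lemma \ref{ear}.

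First I would define the coloring. For a vertex $T\in2^{[n]}$ put $k=\lvert{T}\rvert$, let $\lambda\in[g]$ be the unique residue with $k\equiv\lambda\pmod{g}$, and let $i\in[q]$ be the color assigned to $T$ by the hypothesized $q$-coloring of the layer $\binom{[n]}{k}$. Color $T$ with the pair $(\lambda,i)$. This uses at most $gq$ colors, and the class indexed by $(\lambda,i)$ is exactly $M_{\lambda,i}:=\bigcup_{k\in{I_\lambda}}\mathcal{F}_k^i$, where $I_\lambda=\{k\in[0,n]:\,k\equiv\lambda\pmod{g}\}$ as in Lemma \ref{ear}.

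Next, to prove that each $M_{\lambda,i}$ is a mutual-visibility set, I would invoke Lemma \ref{ear} for the fixed residue $\lambda$, taking the families $\mathcal{F}_k:=\mathcal{F}_k^i$ for $k\in{I_\lambda}$ and $\mathcal{F}_k:=\emptyset$ for $k\notin{I_\lambda}$, so that $\bigcup_{k\in{I_\lambda}}\mathcal{F}_k=M_{\lambda,i}$. It then remains to verify hypothesis \eqref{era} for every $k\in[0,n]$. For $k\notin{I_\lambda}$ this is immediate: since $\mathcal{F}_k=\emptyset$, any $T\in\binom{[n]}{k}$ with $A\cap{B}\subseteq{T}\subseteq{A}\cup{B}$ works, and such $T$ exists because the constraints $\lvert{A}\rvert+g\leq{k}\leq\lvert{B}\rvert-g$ force $\lvert{A\cap{B}}\rvert\leq\lvert{A}\rvert\leq{k}\leq\lvert{B}\rvert\leq\lvert{A\cup{B}}\rvert$.

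The one nontrivial step, and the place I expect to require care, is deriving \eqref{era} from the weaker-looking \eqref{are} when $k\in{I_\lambda}$. Given arbitrary $A,B$ with $\lvert{A}\rvert+g\leq{k}\leq\lvert{B}\rvert-g$, write $P=A\cap{B}$ and $U=A\cup{B}$, so that $P\subseteq{U}$, $\lvert{P}\rvert\leq\lvert{A}\rvert\leq{k}-g$, and $\lvert{U}\rvert\geq\lvert{B}\rvert\geq{k}+g$. I would interpolate: choose $A'$ with $P\subseteq{A'}\subseteq{U}$ and $\lvert{A'}\rvert=k-g$, then $B'$ with $A'\subseteq{B'}\subseteq{U}$ and $\lvert{B'}\rvert=k+g$; both choices are feasible precisely because of the size bounds on $P$ and $U$. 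Applying \eqref{are} to the nested pair $A'\subseteq{B'}$ produces $T\in\binom{[n]}{k}\backslash\mathcal{F}_k^i$ with $A'\subseteq{T}\subseteq{B'}$, and hence $A\cap{B}=P\subseteq{A'}\subseteq{T}\subseteq{B'}\subseteq{U}=A\cup{B}$, which is exactly \eqref{era}. With \eqref{era} confirmed, Lemma \ref{ear} yields that $M_{\lambda,i}$ is a mutual-visibility set; as this holds for every pair $(\lambda,i)$, the coloring is proper and $\chi_\mu(Q_n)\leq{gq}$.
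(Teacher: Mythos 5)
Your proposal is correct and follows essentially the same route as the paper: partition the layers by residue modulo $g$, pair each residue class with the per-layer color classes to get the $gq$ sets $\bigcup_{k\in I_\lambda}\mathcal{F}_k^i$, and verify hypothesis \eqref{era} of Lemma \ref{ear} by interpolating $A\cap B\subseteq A'\subseteq B'\subseteq A\cup B$ with $\lvert A'\rvert=k-g$, $\lvert B'\rvert=k+g$ and invoking \eqref{are}. The only cosmetic difference is that you set $\mathcal{F}_k=\emptyset$ for $k\notin I_\lambda$ when applying Lemma \ref{ear}, while the paper feeds in $\mathcal{F}_k^i$ for every $k$; both are valid and the argument is otherwise identical.
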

\begin{proof}[Proof of Lemma \ref{eye}]
First, we show that the property \eqref{are} is equivalent to the property \eqref{era}. It is obvious that \eqref{era} implies \eqref{are}. It suffices to show the other direction. Fix any $A,B\in2^{[n]}$ with $\lvert{A}\rvert+g\leq{k}\leq\rvert{B}\rvert-g$. Since $\lvert{A\cap{B}}\rvert\leq\lvert{A}\rvert\leq{k-g}$ and $\lvert{A\cup{B}}\rvert\geq\lvert{B}\rvert\geq{k+g}$, there are some $A'\in\binom{[n]}{k-g}$ and $B'\in\binom{[n]}{k+g}$ with $A\cap{B}\subseteq{A'}\subseteq{B'}\subseteq{A\cup{B}}$. Then by \eqref{are} there exists $T\in\binom{[n]}{k}\backslash\mathcal{F}_{k}$ such that $A'\subseteq{T}\subseteq{B'}$, which in particular implies that $A\cap{B}\subseteq{T}\subseteq{A}\cup{B}$.

Now for each $k\in[0,n]$, we color the layer $\binom{[n]}{k}$ with $q$ colors such that every color class satisfies the property \eqref{are} and thus the property \eqref{era}. Let $\mathcal{F}_{k}^{i}\subseteq\binom{[n]}{k}$ denote the $i$th color class in the $k$th layer. It holds that
\begin{equation}
V(Q_{n})=\bigcup_{k=0}^{n}\binom{[n]}{k}=\bigcup_{k=0}^{n}\left(\bigcup_{i=1}^{q}\mathcal{F}_{k}^{i}\right)=\bigcup_{i\in[q],\lambda\in[g]}\left(\bigcup_{k\in{I_{\lambda}}}\mathcal{F}_{k}^{i}\right),
\end{equation}
where $I_{\lambda}=\left\{k\in[0,n]:\,k\equiv\lambda\pmod{g}\right\}$. By Lemma \ref{ear} the set $\bigcup_{k\in{I_{\lambda}}}\mathcal{F}_{k}^{i}$ is a mutual-visibility set in $Q_{n}$. So $V(Q_{n})$ can be partitioned into $gq$ mutual-visibility sets, namely, $\chi_{\mu}(Q_{n})\leq{gq}$. 
\end{proof}

Now we are ready to prove the upper bound in Theorem \ref{new}.

\begin{proof}[Proof of the upper bound]
Assume that $n$ is sufficiently large and all logarithms are in base $2$. By Lemma \ref{eye}, to prove the stated upper bound it suffices to show that for $g=\lfloor{\log\log{n}}\rfloor\geq3$ we can color every layer $\binom{[n]}{k}$ with at most $2$ colors so that each color class satisfies \eqref{are}.

For $k\in[0,n]$ with $k<g$ or $k>n-g$, the whole layer $\binom{[n]}{k}$ satisfies \eqref{are}, because there exists no such pair $(A,B)$ with $A\in\binom{[n]}{k-g}$ and $B\in\binom{[n]}{k+g}$. Hence, the layer $\binom{[n]}{k}$ can be colored with only one color.

For $k\in[0,n]$ with $g\leq{k}\leq{n-g}$, we color the layer $\binom{[n]}{k}$ with $2$ colors uniformly at random. Let
\begin{equation}
\mathcal{J}:=\left\{(A,B):\,A\in\binom{[n]}{k-g}\text{ and }B\in\binom{[n]}{k+g}\text{ with }A\subseteq{B}\right\}.
\end{equation}
For each $(A,B)\in\mathcal{J}$, we define the event $\mathcal{E}_{(A,B)}$ that all $T\in\binom{[n]}{k}$ with $A\subseteq{T}\subseteq{B}$ are colored with the same color. It is not hard to see that
\begin{equation}
\mathbb{P}\left(\mathcal{E}_{(A,B)}\right)\leq{2^{1-\binom{2g}{g}}}=:p.
\end{equation}

Observe that a given event $\mathcal{E}_{(A,B)}$ is mutually independent of all the other events except for those $\mathcal{E}_{(A',B')}$, where there exists $T\in\binom{[n]}{k}$ with $A\subseteq{T}\subseteq{B}$ and $A'\subseteq{T}\subseteq{B'}$. We shall count the number of such events $\mathcal{E}_{(A',B')}$, denoted by $d$. First, note that the number of $T\in\binom{[n]}{k}$ with $A\subseteq{T}\subseteq{B}$ is $\binom{2g}{g}$. Moreover, for every $T\in\binom{[n]}{k}$, there are $\binom{k}{g}\binom{n-k}{g}$ pairs $(A',B')\in\mathcal{J}$ such that $A'\subseteq{T}\subseteq{B'}$. So we have
\begin{equation}
d\leq\binom{2g}{g}\binom{k}{g}\binom{n-k}{g}\leq\binom{2g}{g}\binom{n}{2g}<\left(\frac{en}{g}\right)^{2g}.
\end{equation}
Now since
\begin{equation}
\begin{aligned}
ep(d+1)\leq{2^{1-\binom{2g}{g}}}n^{2g}&\leq2^{-2^{2\log\log{n}}/\sqrt{100\log\log{n}}}n^{2\log\log{n}}\\
&\leq2^{-(\log{n})^{2}/\sqrt{100\log\log{n}}+2\log{n}\log\log{n}}\leq1,
\end{aligned}
\end{equation}
by Lov\'{a}sz Local Lemma\cite{erdos1973local} (see also \cite[Theorem 1.5]{spencer1977local}), there is a positive probability that none of the events $\mathcal{E}_{(A,B)}$ with $(A,B)\in\mathcal{J}$ occurs. Namely, there is a coloring of $\binom{[n]}{k}$ with $2$ colors such that both color classes satisfy \eqref{are}, from which the upper bound follows.
\end{proof}

\end{document}